\begin{document}

\newcommand{\mmbox}[1]{\mbox{${#1}$}}
\newcommand{\proj}[1]{\mmbox{{\mathbb P}^{#1}}}
\newcommand{\Cr}{C^r(\Delta)}
\newcommand{\CR}{C^r(\hat\Delta)}
\newcommand{\affine}[1]{\mmbox{{\mathbb A}^{#1}}}
\newcommand{\Ann}[1]{\mmbox{{\rm Ann}({#1})}}
\newcommand{\caps}[3]{\mmbox{{#1}_{#2} \cap \ldots \cap {#1}_{#3}}}
\newcommand{\N}{{\mathbb N}}
\newcommand{\Z}{{\mathbb Z}}
\newcommand{\R}{{\mathbb R}}
\newcommand{\Tor}{\mathop{\rm Tor}\nolimits}
\newcommand{\Ext}{\mathop{\rm Ext}\nolimits}
\newcommand{\Hom}{\mathop{\rm Hom}\nolimits}
\newcommand{\im}{\mathop{\rm Im}\nolimits}
\newcommand{\rank}{\mathop{\rm rank}\nolimits}
\newcommand{\supp}{\mathop{\rm supp}\nolimits}
\newcommand{\arrow}[1]{\stackrel{#1}{\longrightarrow}}
\newcommand{\CB}{Cayley-Bacharach}
\newcommand{\coker}{\mathop{\rm coker}\nolimits}
\sloppy
\newtheorem{defn0}{Definition}[section]
\newtheorem{prop0}[defn0]{Proposition}
\newtheorem{conj0}[defn0]{Conjecture}
\newtheorem{thm0}[defn0]{Theorem}
\newtheorem{lem0}[defn0]{Lemma}
\newtheorem{corollary0}[defn0]{Corollary}
\newtheorem{example0}[defn0]{Example}
\newtheorem{remark0}[defn0]{Remark}

\newenvironment{defn}{\begin{defn0}}{\end{defn0}}
\newenvironment{prop}{\begin{prop0}}{\end{prop0}}
\newenvironment{conj}{\begin{conj0}}{\end{conj0}}
\newenvironment{thm}{\begin{thm0}}{\end{thm0}}
\newenvironment{lem}{\begin{lem0}}{\end{lem0}}
\newenvironment{cor}{\begin{corollary0}}{\end{corollary0}}
\newenvironment{exm}{\begin{example0}\rm}{\end{example0}}
\newenvironment{rem}{\begin{remark0}\rm}{\end{remark0}}

\newcommand{\defref}[1]{Definition~\ref{#1}}
\newcommand{\propref}[1]{Proposition~\ref{#1}}
\newcommand{\thmref}[1]{Theorem~\ref{#1}}
\newcommand{\lemref}[1]{Lemma~\ref{#1}}
\newcommand{\corref}[1]{Corollary~\ref{#1}}
\newcommand{\exref}[1]{Example~\ref{#1}}
\newcommand{\secref}[1]{Section~\ref{#1}}
\newcommand{\remref}[1]{Remark~\ref{#1}}

\newcommand{\std}{Gr\"{o}bner}
\newcommand{\jq}{J_{Q}}



\title{On freeness of divisors in $\mathbb P^2$}

\author{\c Stefan O. Toh\v aneanu}
\address{Department of Mathematics\\ The University of Western Ontario\\ London, Ontario N6A 5B7\\}
\email{stohanea@uwo.ca}

\subjclass[2000]{Primary 13D02; Secondary 52C35, 13C14} \keywords{regular sequence, Hilbert-Burch resolution, syzygies, hyperplane arrangements}

\begin{abstract}
\noindent Let $I\subset \mathbb C[x,y,z]$ be an ideal of height 2 and minimally generated by three homogeneous polynomials of the same degree. If $I$ is a locally complete intersection we give a criterion for $\mathbb C[x,y,z]/I$ to be arithmetically Cohen-Macaulay. Since the setup above is most commonly used when $I=J_F$ is the Jacobian ideal of the defining polynomial of a ``quasihomogeneous'' reduced curve $Y=V(F)$ in $\mathbb P^2$, our main result becomes a criterion for freeness of such divisors. As an application we give an upper bound for the degree of the reduced Jacobian scheme when $Y$ is a free rank 3 central essential arrangement, as well as we investigate the connections between the first syzygies on $J_F$, and the generators of $\sqrt{J_F}$.
\end{abstract}
\maketitle

\section{Introduction}

In the landmark paper of Saito (\cite{s}) it was introduced the concept of a free divisor $Y$ on a smooth algebraic variety $X$. A divisor $Y$ on $X$ is {\em free} if the $\mathcal O_X-$module $$Der_X(-\log Y):=\{\theta\in Der|\theta(\mathcal O_X(-Y))\subseteq \mathcal O_X(-Y)\},$$ is free, where $\mathcal O_X$ is the sheaf of regular functions on $X$.

Terao specialized the study to the case when $Y$ is an arrangement of hyperplanes in a vector space, obtaining amazing results in both the algebraic and topological directions of study of such divisors. We mention just a few of these results: Terao's Factorization Theorem (\cite{t}) shows that the Poincar\'{e} polynomial of the complement of a free arrangement factors completely, and Terao's Addition-Deletion Theorem (\cite{t2}) which relates the freeness of $Y$ to the freeness of $Y'$, the hyperplane arrangement obtained from $Y$ by removing a hyperplane. Also, Terao conjectured that over a field of characteristic 0, the condition to be free depends only on the intersection lattice (for background related to hyperplane arrangements one should check \cite{ot}).

This early success, and the very difficult conjecture mentioned above, determined a whole pleiad of mathematicians to look for various criteria of freeness for hyperplane arrangements, and more generally, for other type of divisors. An interesting nice criterion is due to Yoshinaga (\cite{y1}, Corollary 3.3), and relates the freeness of an arrangement of lines in $\mathbb P^2$ to the restricted multiarrangement. This criterion was later generalized by Schulze (\cite{sc}, Theorem 2) to higher rank hyperplane arrangements.

Outside the world of hyperplane arrangements, the tests for freeness are specialized to divisors with quasihomogeneous singularities. Without this condition, as we can see in \cite{st}, Example 4.1 and Example 4.2, Terao's Conjecture is not true anymore (in the sense that there exists two arrangements of lines and conics with the same real picture, but one free and the other not free), and even Yoshinaga's criterion fails to work. For the case of quasihomogeneous line-conic arrangements, the same paper presents a similar method to Terao's Addition-Deletion Theorem. At the other end, i.e. when the divisor is irreducible, it seems that freeness and quasihomogeneity of the singularities should also be put together (see \cite{sim3}, Proposition 4.4). Note that for divisors in $\mathbb P^2$, quasihomogeneity and linear type property are equivalent notions. In affine space, or in more dimensions, this is not true anymore. Also, \cite{dssww} goes to the extent of studying freeness of general divisors $Y$ on any $X$, when $Y$ admits locally an Euler vector field (i.e., ``quasihomogeneous'').

Our notes follow the same path; we give a criterion for freeness for quasihomogeneous curves in $\mathbb P^2$. In a more general setting we prove the following:

\vskip .1in

\noindent \textbf{Main Result:} Let $I\subset R=\mathbb C[x,y,z]$ be a height $2$ locally a complete intersection ideal, minimally generated by three homogeneous polynomials of the same degree. Then, $R/I$ is arithmetically Cohen-Macaulay if and only if there exists a syzygy on $I$ forming an $R-$regular sequence.

\vskip .1in

This result is in the spirit of a note of Eisenbud and Huneke (\cite{eh}); the title says it all: ``Ideals with a regular sequence as syzygy''. Their theorem is the following:

\begin{thm} (\cite{eh}, Theorem)\label{thm:Thm0} Let $R$ be a local Noetherian ring, let $f_1,\ldots,f_k\in R$ such that $$s_1f_1+\cdots+s_kf_k=0,$$ where $s_1,\ldots,s_k\in R$ is a regular sequence. Let $I=\langle f_1,\ldots,f_k\rangle$ and suppose that $grade(I)=k-1,$ the maximal possible value. Then
\begin{enumerate}
  \item if $k$ is odd, then $R/I$ is perfect of Cohen-Macaulay type 2.
  \item if $k$ is even, there exists an element $f\notin I$ such that $I:\langle s_1,\ldots,s_n\rangle=\langle I,f\rangle$, and $\langle I,f\rangle$ is perfect of Cohen-Macaulay type 1.
\end{enumerate}
\end{thm}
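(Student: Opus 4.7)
The hypothesis $\sum_{i=1}^k s_if_i = 0$ with $s = (s_1,\ldots,s_k)$ an $R$-regular sequence says that $(f_1,\ldots,f_k)$ is a syzygy on $s$. Since $s$ is regular, the Koszul complex $K_\bullet(s;R)$ is acyclic, and therefore the syzygy module of $s$ is spanned by the trivial Koszul relations $s_je_i - s_ie_j$. Consequently one can write $f_i = \sum_j a_{ij}s_j$ for an alternating matrix $A = (a_{ij})$ (i.e.\ $a_{ji} = -a_{ij}$), equivalently $f = As$ as column vectors. This alternating matrix is the essential structural input, and the split into odd and even $k$ in the conclusion will directly mirror the classical parity dichotomy for Pfaffians of alternating matrices.

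For odd $k$, the plan is to produce a free resolution of $R/I$ of length $k-1$ whose final nonzero term has rank $2$, with differentials built from $A$, $s$, and $f$, modeled on the Buchsbaum--Eisenbud-type structure theorems for ideals associated to alternating matrices. Acyclicity would be verified via the Buchsbaum--Eisenbud criterion, the required grade bounds on ideals of minors of $A$ being extracted from the hypotheses that $\mathrm{grade}(I) = k-1$ is maximal and that $s$ is regular. Perfection is then automatic (projective dimension equals grade), and Cohen--Macaulay type $2$ can be read off the last free module, corresponding to two independent Pfaffian-type syzygies at the tail.

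For even $k$, set $f := \mathrm{pf}(A) \in R$. Standard Pfaffian identities (the even analog of $A \cdot \mathrm{adj}(A) = \det(A)\cdot I$) yield $s_if \in I$ for each $i$, so $\langle I, f\rangle \subseteq I : \langle s_1,\ldots,s_k\rangle$. The reverse inclusion, together with the nontriviality $f \notin I$, should follow from a colon/grade computation leveraging the regularity of $s$; the extended ideal $\langle I, f\rangle$ would then be Gorenstein of grade $k-1$ (hence Cohen--Macaulay type $1$) via an analogous Pfaffian-type resolution.

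The principal obstacle is the acyclicity verification. For alternating matrices the ideals of minors factor through Pfaffians, so the Buchsbaum--Eisenbud grade inequalities translate into statements about ideals of Pfaffians that are harder to extract directly from the bare hypothesis $\mathrm{grade}(I) = k-1$. A secondary hazard is confirming $f \notin I$ in the even case, without which the colon identity collapses; here too the argument must push through the regularity of $s$ with some care.
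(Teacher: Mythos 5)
The paper does not prove this statement: it is quoted verbatim from Eisenbud--Huneke \cite{eh}, and the author explicitly points to Kustin \cite{k} for the underlying resolutions and to \cite{eh} for the specialization argument. So the comparison here is against that cited proof rather than against anything in the present paper. Your opening move is exactly right and is the entry point of the actual argument: since $s_1,\ldots,s_k$ is a regular sequence, the Koszul complex $K_\bullet(s;R)$ is acyclic, so the syzygy $(f_1,\ldots,f_k)$ on $s$ is a combination of the trivial relations $s_je_i-s_ie_j$, which packages into $f=As$ with $A$ alternating; the odd/even dichotomy then tracks the parity behaviour of Pfaffians. This correctly identifies these ideals as specializations of the Huneke--Ulrich deviation-two ideals.

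The genuine gap is the acyclicity step, and the difficulty you flag at the end is real: for the specialized matrix $A$ the hypotheses give you essentially no direct control over the grades of the ideals of Pfaffians of $A$, so a frontal Buchsbaum--Eisenbud verification on the complex you build from $A$, $s$, and $f$ is not going to go through. The missing idea is to not work with the specialized complex at all: one resolves the ideal in the \emph{generic} case (a polynomial ring in the entries of $A$ and of $s$, where the relevant Pfaffian ideals have known, maximal grades --- this is Kustin's contribution), checks that the generic quotient is perfect of grade $k-1$, and then invokes the persistence-of-perfection principle: a finite free resolution of a perfect module of grade $g$ remains acyclic under any base change for which the grade of the specialized ideal is still at least $g$. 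The hypothesis $\mathrm{grade}(I)=k-1$, ``the maximal possible value,'' is precisely what licenses this specialization; it is not an input to a grade estimate on minors of $A$. The same mechanism delivers the even-case claims you leave open, namely $f=\mathrm{pf}(A)\notin I$ and the equality $I:\langle s_1,\ldots,s_k\rangle=\langle I,f\rangle$ (your argument only gives the inclusion $\supseteq$), since both are read off from the generic resolutions and survive specialization under the same grade hypothesis. As written, your proposal is a correct strategic outline with the decisive steps deferred, and the deferred route you suggest for them is the one that would fail.
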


It is very important to mention that Kustin, in \cite{k}, gives a complete answers in regard to the graded minimal free resolutions of ideals $I$ and $\langle I,f\rangle$, where $I$ is generated by the entries in the product of a vector of variables with an alternating square matrix of some other variables, and $f$ is the Pfaffian of this square matrix. The specialization from this generic case to the ideals in the above theorem is done in \cite{eh}.

\vskip .1in

Our case of interest is the situation when $I=J_F=\langle \frac{\partial F}{\partial x_0},\ldots,\frac{\partial F}{\partial x_{k-1}}\rangle \subset\mathbb C[x_0,\ldots,x_{k-1}]$ is the Jacobian ideal of a divisor $Y=V(F)\subset\mathbb P^{k-1}, k\geq 3,$ with $ht(J_F)=2$ (e.g., $Y$ is an arrangement of hyperplanes). In this case $grade(J_F)=2$ and it can be maximal in the sense of the theorem above only when $k=3$. Despite the beauty of Theorem \ref{thm:Thm0}, we can use only the first part with $k=3$. But in this particular instance, from the definition of regular sequences, the result is a restatement of the Hilbert-Burch Theorem (\cite{e}, Theorem 20.15). The restriction to divisors on $\mathbb P^2$ is made not only because of the above considerations, but also due to an example (Example \ref{exm:P3}) when the natural generalization of this criterion to higher dimensions does not work.

We should mention that the local complete intersection condition in our result means exactly that the ideal $I$ is of linear type (see \cite{AA}, Lemma 3.1 and Corollary 3.2). So, under the linear type assumption, with Corollary 3.12 in \cite{sim1}, we can add an equivalent statement to the nice criterion for freeness of divisors in $\mathbb P^2$ obtained in \cite{sim1}, Proposition 4.1.

In Section 2 we give a detailed proof of our result. One implication is immediate from Theorem \ref{thm:Thm0}, and the other implication uses several steps. In Section 3 we give an application of this criterion to obtain an upper bound for the degree of the reduced Jacobian scheme of a free line arrangements in $\mathbb P^2$. We also study the first homological properties of this zero-dimensional reduced scheme, in connection to the syzygies on the Jacobian ideal of the divisor.

\section{Almost complete intersections of height 2 in $\mathbb C[x,y,z]$}

Let $I$ be an ideal in $R=\mathbb C[x,y,z]$, the ring of homogeneous polynomials with coefficients in $\mathbb C$, the field of complex numbers. Also, we assume that the height (or codimension) of $I$ is $ht(I)=2$, and $I$ is minimally generated by three homogeneous polynomials of the same degree.

By \cite{e}, $R/I$ is arithmetically Cohen-Macaulay if and only if $R/I$ has the Hilbert-Burch minimal free resolution $$0\rightarrow R^2\rightarrow R^3\rightarrow R\rightarrow R/I\rightarrow 0.$$

A first syzygy on $I$ is a $3-$tuple $(A,B,C), A,B,C\in R$, such that $$Af_1+Bf_2+Cf_3=0,$$ for some minimal generating set of homogeneous polynomials $\{f_1,f_2,f_3\}$ for the ideal $I$ (i.e., $I=\langle f_1,f_2,f_3\rangle$). A syzygy $(A,B,C)$ on $I$ will be called \textit{regular} if $A,B,C$ form an $R-$regular sequence.

$I$ is locally a complete intersection iff $I_{\underline{p}}\subset R_{\underline{p}}$ is generated by two elements, for all minimal prime ideals $\underline{p}$ of $I$. By \cite{cs}, Theorem 1.7, $I$ is locally a complete intersection if and only if the only syzygies $(A,B,C)$ on $I$ with $A,B,C\in I^{sat}$ are the Koszul syzygies. This result of Cox and Schenck was the initial inspirational point for our main result: observe that the syzygies they considered have $V(A,B,C)\neq \emptyset$, and our question addresses when does $I$ has a syzygy $(A,B,C)$, with $V(A,B,C)=\emptyset$?

In what follows we will use extensively \cite{e}, Corollary 17.7. In our setup ($R$ is a local ring of maximal ideal $\langle x,y,z\rangle$) this result translates to the following: a set of three homogeneous polynomials $\{A,B,C\}$ in $R$ forms a regular sequence if and only if $ht(A,B,C)=3$ (equivalently $V(A,B,C)=\emptyset$ as a set in $\mathbb P^2$).

\begin{thm}\label{thm:main} Let $I\subset R$ be a height 2 ideal locally a complete intersection, minimally generated by three polynomials of the same degree. Then $R/I$ is arithmetically Cohen-Macaulay if and only if there exists a regular syzygy on $I$.
\end{thm}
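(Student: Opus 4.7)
The equivalence splits into two implications. For the direction $(\Leftarrow)$, given a regular syzygy $(A,B,C)$ on $I$, I would apply \thmref{thm:Thm0} directly with $k=3$: since $\mathrm{grade}(I)=2=k-1$ is maximal and $k$ is odd, $R/I$ is perfect, which in the graded setting is precisely arithmetic Cohen--Macaulayness. An equivalent hands-on approach: the Koszul complex on the regular sequence $A,B,C$ is exact, so $(f_1,f_2,f_3)$, which lies in the kernel of its first differential by the syzygy relation, is a boundary; the resulting coefficients $r_{ij}$ assemble with $(A,B,C)$ into a $3\times 2$ matrix whose $2\times 2$ minors are $\pm f_1,\pm f_2,\pm f_3$, giving a Hilbert--Burch resolution.

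For the substantive direction $(\Rightarrow)$, assume $R/I$ is ACM and let the Hilbert--Burch resolution produce a syzygy matrix $M=(\sigma_1\mid\sigma_2)$ with columns of degrees $e_1\leq e_2$ (both at least $1$, else the $f_i$ would be $\mathbb{C}$-linearly dependent, contradicting minimality). Every first syzygy on $I$ has the form $p\sigma_1+q\sigma_2$, so the task is to find homogeneous $p,q\in R$ making $(A,B,C):=p\sigma_1+q\sigma_2$ have empty common zero locus in $\mathbb{P}^2$, which by \cite{e}, Corollary 17.7 is equivalent to $A,B,C$ being a regular sequence. Observe that $(A,B,C)$ vanishes at $P$ if and only if $(p(P),q(P))\in\ker M(P)$.

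The central step is a pointwise rank analysis of $M$. At $P\notin V(I)$ some $\pm f_i(P)$, which is a $2\times 2$ minor of $M(P)$, does not vanish, so $\mathrm{rank}\,M(P)=2$ and $\ker M(P)=0$. At $P\in V(I)$ every $2\times 2$ minor vanishes, so $\mathrm{rank}\,M(P)\leq 1$. The locally complete intersection hypothesis upgrades this to equality: tensoring the presentation $R^2\xrightarrow{M}R^3\to I\to 0$ with the residue field $\kappa(P)$ gives, by Nakayama, $\mu(I_P)=3-\mathrm{rank}\,M(P)$, and $\mu(I_P)=2$ forces $\mathrm{rank}\,M(P)=1$. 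Hence $\ker M(P)$ is a $1$-dimensional line in $\mathbb{C}^2$ at each of the finitely many points of $V(I)$.

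The final step is a general-position argument, with the twist that in $\mathbb{P}^2$ two positive-degree forms always share a zero, so at least one of $p,q$ must be a nonzero constant. Setting $q=1$ and $p\in R_{e_2-e_1}$ produces a homogeneous triple of degree $e_2$. At each of the finitely many points $P_i\in V(I)$ the condition $(p(P_i),1)\notin\ker M(P_i)$ translates to $p(P_i)\neq\gamma_i$ for a specific scalar $\gamma_i$ (or is automatic when $\ker M(P_i)$ is spanned by $(1,0)$), cutting out finitely many affine hyperplanes in the $\mathbb{C}$-vector space $R_{e_2-e_1}$; as $\mathbb{C}$ is infinite, a suitable $p$ exists. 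The main obstacle is the rank analysis itself: without the locally CI hypothesis, $\mathrm{rank}\,M(P)$ could drop to $0$ at some $P$, forcing every syzygy in $R\sigma_1+R\sigma_2$ to vanish there and precluding any regular one---precisely the phenomenon that breaks the natural analogue in higher dimensions, as \exref{exm:P3} illustrates.
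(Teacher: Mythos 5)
Your proposal is correct and follows essentially the same route as the paper: the backward direction is the Koszul/Hilbert--Burch computation, and the forward direction rests on the same three ingredients --- the Hilbert--Burch presentation matrix, the rank-one analysis at points of $V(I)$ extracted from the locally complete intersection hypothesis (the paper's Step 1), and a genericity argument over the finite set $V(I)$ applied to the perturbations $p\sigma_1+\sigma_2$ (the paper's ideals $I(f)$, with $p=-f$). The only divergence is the concluding existence step: where you note that the bad choices of $p$ lie in finitely many proper affine subspaces of $R_{e_2-e_1}$ and invoke that $\mathbb{C}$ is infinite, the paper fixes a single $f$ nonvanishing on $V(I)$ and derives a contradiction from the pairwise-disjoint nonempty sets $V(I(kf))\subseteq V(I)$ for $k=1,2,\ldots$; your finish is a touch more direct and is close in spirit to the prime-avoidance shortcut alluded to in the paper's footnote.
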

\begin{proof} $``\Leftarrow''$ Let $(A,B,C)$ be a regular syzygy on $I$. This means that there exist $f_1,f_2,f_3\in R$ such that $I=\langle f_1,f_2,f_3\rangle$ and $$Af_1+Bf_2+Cf_3=0$$ with $\{A,B,C\}$ forming a regular sequence in $R$.

From the syzygy equation above and from the definition of a regular sequence, we have $f_1\in\langle B,C\rangle$, and so $f_1=BD+CE$. Therefore $$B(AD+f_2)+C(AE+f_3)=0,$$ which will give that $AD+f_2=CF$ and $AE+f_3=-BF$. We get that $f_1,-f_2,f_3$ are the $2\times2$ minors of the matrix $$\left[
\begin{array}{cc}
A&F \\
B&-E \\
C&D
\end{array}
\right].$$ But this means that $R/I$ has the desired Hilbert-Burch minimal free resolution (\cite{e}, Theorem 20.15).

\vskip .2in

$``\Rightarrow''$\footnote{The referee suggested an alternative shorter proof of this implication using Prime Avoidance (\cite{e}, Lemma 3.3), yet the details of this approach remains to be clarified.} Suppose $R/I$ has a minimal free resolution $$0\rightarrow R^2\stackrel{\phi}\rightarrow R^3\rightarrow R\rightarrow R/I\rightarrow 0,$$ where $\phi=\left[
\begin{array}{cc}
A_1&A_2 \\
B_1&B_2 \\
C_1&C_2
\end{array}
\right]$. Let $$f_1=B_1C_2-B_2C_1, f_2=A_1C_2-A_2C_1, f_3=A_1B_2-A_2B_1,$$ be the $2\times 2$ minors of $\phi$ which will minimally generate the ideal $I$.

We are going to show that there exist a syzygy $(A,B,C)$ on these generators (i.e., $Af_1+Bf_2+Cf_3=0$) with $\{A,B,C\}$ forming a regular sequence.

On a side note, every minimally generating set of $I$ will have a regular syzygy: if $I=\langle g_1,g_2,g_3\rangle$, then $$(g_1,g_2,g_3)= (f_1,f_2,f_3)\mathcal M,$$ for some invertible $3\times 3$ matrix $\mathcal M$ with entries in $\mathbb C$. If $\{A,B,C\}$ is a regular sequence with $Af_1+Bf_2+Cf_3=0,$ then $\{A',B',C'\}$, where $\left[
\begin{array}{c}
A' \\
B' \\
C'
\end{array}
\right]=\mathcal M^{-1}\cdot \left[
\begin{array}{cc}
A \\
B \\
C
\end{array}
\right]$, is also a regular sequence with $A'g_1+B'g_2+C'g_3=0$.

\vskip .2in

If $\{A_1,B_1,C_1\}$ is a regular sequence or $\{A_2,B_2,C_2\}$ is a regular sequence, we are done. Suppose neither of them is, and suppose that $\deg(A_1)=\deg(B_1)=\deg(C_1)=d_1$ and $\deg(A_2)=\deg(B_2)=\deg(C_2)=d_2$, with $d_2\geq d_1$. The goal is to show that there exists $f\in R_{d_2-d_1}$ such that $\{A_2-fA_1,B_2-fB_1,C_2-fC_1\}$ is a regular sequence.

\vskip .1in

\noindent\textit{Step 1.} First we show that $Z=V(A_1,A_2,B_1,B_2,C_1,C_2)=\emptyset$. If $[a,b,c]\in Z$ is a point, then $[a,b,c]\in V(I)$. Suppose $\underline{p}$ is the ideal of the point $[a,b,c]$. Localizing at $\underline{p}$, and since $I$ is locally a complete intersection, we obtain $$0\rightarrow R_{\underline{p}}^2 \rightarrow R_{\underline{p}}^3\rightarrow I_{\underline{p}}\rightarrow 0$$ and $$0\rightarrow R_{\underline{p}} \rightarrow R_{\underline{p}}^2\rightarrow I_{\underline{p}}\rightarrow 0$$ to be two free resolutions of $I_{\underline{p}}$.

Obviously, the first resolution is not minimal, so after some columns operations, we get that the matrix $\phi$ has an entry which is an invertible element in $R_{\underline{p}}$. So some polynomial combination of $A_1,A_2,B_1,B_2,C_1,C_2$ does not vanish at $[a,b,c]$. Contradiction with the assumption. So $Z=\emptyset$.

\vskip .1in

\noindent\textit{Step 2.} For every $f\in R_{d_2-d_1}$, construct the ideal $$I(f)=\langle A_2-fA_1,B_2-fB_1,C_2-fC_1 \rangle.$$ If we show that there exists an $f$ such that $V(I(f))=\emptyset$ we will be done (see the result we stated before the theorem).

Suppose that for all $f\in R_{d_2-d_1}$, $V(I(f))\neq \emptyset$. For each $f$, let $$P_f\in V(I(f)).$$ Since $A_2(P_f)-f(P_f)A_1(P_f)=0,B_2(P_f)-f(P_f)B_1(P_f)=0,C_2(P_f)-f(P_f)C_1(P_f)=0$, rewriting the generators of $I$ in a convenient way (for example, $f_1=B_1(C_2-fC_1)-C_1(B_2-fB_1)$), we obtain that $P_f\in V(I)$. So for each $f\in R_{d_2-d_1}$, we have $V(I(f))\subset V(I)$.

\vskip .1in

\noindent\textit{Step 3.} If $[a,b,c]\in V(I(f))\cap V(I(g))$ for some $f,g\in R_{d_2-d_1}$, then $f(a,b,c)=g(a,b,c)$.

At Step 1, we saw that $Z=\emptyset$. So one of the $A_1,A_2,B_1,B_2,C_1,C_2$ does not vanish at $[a,b,c]$. If for example $A_2(a,b,c)\neq 0$, since $A_2(a,b,c)-f(a,b,c)A_1(a,b,c)=0$, we get that $A_1(a,b,c)\neq 0$.

Without loss of generality we may assume that $A_1(a,b,c)\neq 0$. Since $[a,b,c]\in V(I(f))\cap V(I(g))$, we get that $$f(a,b,c)=g(a,b,c)=\frac{A_2(a,b,c)}{A_1(a,b,c)}.$$

\vskip .1in

\noindent\textit{Step 4.} Let $f\in R_{d_2-d_1}$, generic enough (i.e. $f$ does not vanish at any point of $V(I)$). Consider the sequence of polynomials $$f,2\cdot f,3\cdot f,\ldots,k\cdot f,\ldots.$$ Then for every $i\neq j$, $$V(I(i\cdot f))\cap V(I(j\cdot f))=\emptyset.$$ Otherwise, if $[a,b,c]$ is a point in the intersection, from Step 3 we should get that $i\cdot f(a,b,c)=j\cdot f(a,b,c)$, and so $f(a,b,c)=0$. But this will contradict the generic condition on $f$, since by Step 2, $$[a,b,c]\in V(I(i\cdot f))\subset V(I),$$ and so $f$ would vanish at a point of $V(I)$.

So for each $k\geq 1$ we obtain a distinct point in $V(I(k\cdot f))\subset V(I)$. So the cardinality of $V(I)$ is infinity. Contradiction, since $ht(I)=2$.

In conclusion, for this particular $f$, $V(I(f))=\emptyset$.
\end{proof}

For the first implication, the condition that $I$ is locally a complete intersection is not used. In fact, as the referee observed, if $I$ has a regular syzygy, then $I$ is locally a complete intersection, and Theorem \ref{thm:main} can be restated: let $I\subset R=\mathbb C[x,y,z]$ be an ideal of height 2, minimally generated by three homogeneous polynomials of the same degree. Then, $I$ is locally a complete intersection and $R/I$ is arithmetically Cohen-Macaulay $\Leftrightarrow$ $I$ has a regular syzygy.

\vskip .2in

Now we apply this result to divisors in $\mathbb P^2$. Let $Y=V(F)$ be a reduced (not necessarily irreducible) curve in $X=\mathbb P^2$, then $Y$ is {\em free} if and only if the $R-$module $$D(Y):=\{\theta\in Der_{\mathbb C}(R)|\theta(F)\in \langle F\rangle R\},$$ is a free $R-$module, where $R=\mathbb C[x,y,z]$.

Since $F\in R$ is a homogeneous polynomial, one has the Euler relation $\deg(F)\cdot F=xF_x+yF_y+zF_z,$ and we have a splitting $$D(Y)=\theta_E\cdot R\oplus D_0(Y),$$ where $D_0(Y)=D(Y)/\theta_E\cdot R$ is isomorphic to the first syzygy module of the Jacobian ideal $J_F=\langle F_x,F_y,F_z\rangle$ of $F$ and $\theta_E=x\frac{\partial}{\partial x}+y\frac{\partial}{\partial y}+z\frac{\partial}{\partial z}$ is the Euler derivation. Indeed, if $\theta\in D(Y)$, then $\theta(F)=DF,$ for some $D\in R$. But we can write $DF=(\frac{D}{\deg(F)}\cdot\theta_E)(F).$ So modulo $\theta_E\cdot R$, the coefficients of $\theta\in D(Y)$ in the standard basis of $Der(R)$ can be viewed as first syzygy on $F_x,F_y,F_z$. The splitting of $D(Y)$ comes from the fact that if $AF_x+BF_y+CF_z=0$ is a syzygy, then $\theta =A\frac{\partial}{\partial x}+B\frac{\partial}{\partial y}+C\frac{\partial}{\partial z}$ is an element in $D(Y)$, as $\theta(F)=0$.

Suppose $Y$ has isolated singularities (i.e., $ht(J_F)=2$). Then, from above, $D(Y)$ is free $R-$module if and only if the first syzygies module of $J_F$ is free (of rank $2$). This leads to the following well-known criterion (Saito's Criterion\footnote{ Originally (\cite{s}) this criterion was stated as follows: if $\theta_i=a_i\partial_x+b_i\partial_y+c_i\partial_z,i=1,2,3$ are in $D(Y)$ such that the determinant of the $3\times 3$ matrix with columns $(a_i,b_i,c_i)$ is a nonzero scalar multiple of $F$, then $Y$ is free. Replacing one of the $\theta_i$ with $\theta_E$, it comes to show that $F_x,F_y,F_z$ are the maximal minors of a $2\times 3$ matrix, which by Hilbert-Burch Theorem is equivalent to $R/J_F$ being arithmetically Cohen-Macaulay. One can also check \cite{sim1}, Proposition 3.7.}) for freeness: $$Y\mbox{ is free if and only if }R/J_F\mbox{ is arithmetically Cohen-Macaulay}.$$

Let $V(f)\subset \mathbb C^2$ be a reduced curve with isolated singularity at $(0,0)\in V(f)$. $(0,0)$ is called {\em quasihomogeneous} singularity if $f(x,y)=\sum c_{i,j} x^iy^j$ is weighted homogeneous (i.e., there exists rational numbers $\alpha,\beta$ such that $f(x^{\alpha},y^{\beta})$ is homogeneous). $V(J_F)$ describes the singular locus of $Y=V(F)$, and we say that $Y$ is {\em quasihomogeneous} if every $P\in V(J_F)$ is a quasihomogeneous singularity for $F_P$ (the localization of $F$ at $P$).

Every arrangement of lines in $\mathbb P^2$ is quasihomogeneous (see Remark \ref{rem:lci} below).

Now we are ready to prove the following criterion for freeness of a special class of divisors on $\mathbb P^2$.

\begin{thm}\label{thm:freediv} Let $Y=V(F)$ be a quasihomogeneous reduced curve in $\mathbb P^2$. $Y$ is free if and only if there exists a syzygy $AF_x+BF_y+CF_z=0$ with $\{A,B,C\}$ forming an $R-$regular sequence.
\end{thm}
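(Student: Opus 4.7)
The strategy is to combine Saito's criterion with \thmref{thm:main} applied to the Jacobian ideal $I = J_F$. Saito's criterion, recalled in the excerpt, tells us that $Y$ is free if and only if $R/J_F$ is arithmetically Cohen-Macaulay. So it suffices to import the ``$R/I$ aCM $\Leftrightarrow$ regular syzygy exists'' equivalence from \thmref{thm:main} into our Jacobian setting; the task then reduces to verifying the hypotheses of \thmref{thm:main} for $I = J_F$ under the quasihomogeneous assumption.

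First I would check the easy items: $J_F$ is generated by the three homogeneous polynomials $F_x, F_y, F_z$, each of degree $\deg(F) - 1$. After excluding (or separately handling) the degenerate case in which $F$ is a cone, i.e.\ one partial is a $\mathbb C$-linear combination of the others, these form a minimal generating set. Also $ht(J_F) = 2$ because $Y$ is reduced in $\mathbb P^2$ and hence has only isolated singularities, so the singular scheme $V(J_F)$ is zero-dimensional.

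The substantive hypothesis to verify is that $J_F$ is locally a complete intersection. This is exactly where quasihomogeneity enters: as flagged in the introduction via \cite{AA}, for our class of ideals locally a complete intersection is equivalent to linear type, and in $\mathbb P^2$ this property of $J_F$ is in turn equivalent to every singular point of $Y$ being quasihomogeneous. Concretely, at each $P \in V(J_F)$ the quasihomogeneity supplies a local Euler-type relation expressing $F_P$ as an element of its own Jacobian ideal, which allows the localization $(J_F)_{\underline{p}}$ at the minimal prime $\underline{p}$ of $P$ to be cut down to two generators, giving the local complete intersection property.

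With the three hypotheses in place, \thmref{thm:main} yields $R/J_F$ aCM $\Leftrightarrow$ $J_F$ admits a regular syzygy $(A,B,C)$, i.e.\ $AF_x + BF_y + CF_z = 0$ with $\{A,B,C\}$ an $R$-regular sequence. Combined with Saito's criterion, this gives precisely the asserted equivalence. I expect the only real friction to be a clean presentation of the quasihomogeneity $\Rightarrow$ locally complete intersection step, which is conceptually transparent but must be cited correctly from the local characterization of quasihomogeneous singularities via Euler derivations and the linear-type result of \cite{AA}; everything else is a direct appeal to \thmref{thm:main} and Saito.
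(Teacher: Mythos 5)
Your proposal follows essentially the same route as the paper: both reduce the statement to Theorem~\ref{thm:main} via Saito's criterion, and both verify the one substantive hypothesis --- that $J_F$ is locally a complete intersection --- by using quasihomogeneity to obtain the local Euler-type relation (Reiffen's result) placing $f$ in $\langle f_x,f_y\rangle$ at each singular point, so that the localized Jacobian ideal needs only two generators. The paper implements this by first moving the singularities off the line $z=0$ and dehomogenizing to $\langle f_x,f_y,f\rangle$, but that is a presentational detail rather than a different argument.
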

\begin{proof} The result is immediate from Theorem \ref{thm:main}, if we show that $J_F$ is locally a complete intersection. Also, the side note at the beginning of Proof 1 is useful to find a regular syzygy on the specific set of minimal generators of $J_F$, consisting of the partial derivatives of $F$.

After a change of coordinates we may assume that $Y$ has no singularities on the line of equation $z=0$. Let $f(x,y)=F(x,y,1)$. Then the dehomogenization $z\mapsto 1$ gives an isomorphism of rings $$R/J_F\rightarrow \mathbb C[x,y]/\langle f_x,f_y,f\rangle.$$

It is enough to show that the ideal $\langle f_x,f_y,f\rangle$ is locally a complete intersection. Let $P\in \mathbb P^2$ be a singularity of $Y$. Let's assume that $P=[0,0,1]$. Then $P=(0,0)$ is an isolated singularity of $V(f)$. Since $P$ is quasihomogeneous, then, by \cite{st}, Section 1.3 (in fact by Reiffen, \cite{r}), $f\in\langle f_x,f_y\rangle$ in the localization at the ideal $\langle x,y\rangle$ of $P$.

Therefore $\langle f_x,f_y,f\rangle$ is locally a complete intersection. \end{proof}

\begin{rem}\label{rem:lci} The argument used by Schenck (see \cite{s2}, Lemma 2.5.) to show that the Jacobian ideal of a line arrangement (more generally, of a hyperplane arrangement) is locally a complete intersection, does not work at a first glance. Suppose $P=[0,0,1]$ is a singularity of $Y$, when $Y=V(F)\subset\mathbb P^2$ is a line arrangement. Suppose $F=G\cdot H$ with $G(P)=0$ and $H(P)\neq 0$ (i.e., $G$ is the product of the equations of the lines passing through $P$). We have
\begin{eqnarray} F_x&=&G_x\cdot H+G\cdot H_x\nonumber\\
F_y&=&G_y\cdot H+G\cdot H_y\nonumber\\
F_z&=&G_z\cdot H+G\cdot H_z.\nonumber
\end{eqnarray} In the localization at the ideal of $P$, $H$ and $H_z$ are invertible. Furthermore we have $G\in \mathbb C[x,y]$, and therefore $G_z=0$. Since from Euler relation one has $\deg(G)\cdot G=xG_x+yG_y$, we get that locally $F_z\in\langle F_x,F_y\rangle$.

If $G$ is not a product of linear forms, $G_z$ does not vanish and the proof may not work as nicely as for the case of line arrangements.
\end{rem}

It would be interesting to have a similar result for height 2 ideals in $\mathbb C[x_1,\ldots,x_k]$, locally complete intersection minimally generated by $k$ homogeneous polynomials. In our situation, when $k=3$, the first part of the proof uses the fact that if we have a regular syzygy then the minimal generators are the maximal minors of a $2\times 3$ matrix and we used Hilbert-Burch theorem. As we'll see below this is not the case when $k\geq 4$, and a ``natural'' generalization to higher dimensions will not work. Also when $k=3$, the second part of the proof relies on the fact that $V(I)$ is a finite set of points in $\mathbb P^2$.

\begin{exm}\label{exm:P3} Let $Q=xyzw(x+y+z+w)\in R=\mathbb C[x,y,z,w]$, be the defining polynomial of a hyperplane arrangement in $\mathbb P^3$. Let $J_Q=\langle Q_x,Q_y,Q_z,Q_w\rangle$ be the Jacobian ideal of $Q$. We have $ht(J_Q)=2$, $J_Q$ is locally a complete intersection (see Remark \ref{rem:lci}), but $R/J_Q$ is not arithmetically Cohen-Macaulay (i.e, the arrangement is not free).

We have that $J_Q$ is minimally generated by $g_1,g_2,g_3,g_4\in J_Q$, where \begin{eqnarray}
g_1&=&x^2yz+xy^2z+xyz^2+2xyzw\nonumber\\
g_2&=&x^2yw+xy^2w+2xyzw+xyw^2\nonumber\\
g_3&=&x^2zw+2xyzw+xz^2w+xzw^2\nonumber\\
g_4&=&xyzw+\frac{1}{2}y^2zw+\frac{1}{2}yz^2w+\frac{1}{2}yzw^2.\nonumber
\end{eqnarray}

We also have $$s_1g_1+s_2g_2+s_3g_3+s_4g_4=0,$$ where \begin{eqnarray}
s_1&=&-8xw-4zw-3w^2\nonumber\\
s_2&=&4xz+yz+2z^2+3zw\nonumber\\
s_3&=&-y^2-4yw\nonumber\\
s_4&=&4x^2+2xy+14xw.\nonumber
\end{eqnarray}

Noting that $ht(\langle s_1,s_2,s_3,s_4\rangle)=4$, gives that $\{s_1,s_2,s_3,s_4\}$ is an $R-$regular sequence.
\end{exm}

\begin{exm} In this example we can see that the condition of quasihomogeneity is essential for Theorem \ref{thm:freediv} to work. Let $Y=V(x(x+y)(x-y)(x+2y)(x^2+yz))$ be a union of four lines and an irreducible conic in $\mathbb P^2$, all passing through the point $P=[0,0,1]$.

$Y$ is not quasihomogeneous at $P$: let $$f(x,y)=F(x,y,1)=x^6+2x^5y-x^4y^2-2x^3y^3+x^4y+2x^3y^2-x^2y^3-2xy^4.$$ If $x$ has weight $\alpha$ and $y$ has weight $\beta$, then we should have that $6\alpha=5\alpha+\beta=4\alpha+\beta,$ and therefore $\alpha=\beta=0$. We can see this also by following the arguments in \cite{st}, Section 1.3. We have $\deg(J_F)=19$ which equals the sum of Tjurina numbers at each singularity. A curve is quasihomogeneous at a singularity if Tjurina and Milnor numbers are equal. In our case, our singularities are transverse (i.e., the components of $Y$ are not tangent), and therefore the Milnor number at each singularity $P$ equals to $(n_P-1)^2$, where $n_p$ is the number of distinct branches of $Y$ at $P$. The sum of the Milnor numbers equals $20$, so at a singularity (in our case $P$) these two numbers differ.

With Macaulay 2 we get the graded minimal free resolution: $$0\rightarrow R(-7)\oplus R(-8)\stackrel{\phi}\rightarrow R^3(-5)\rightarrow R\rightarrow R/J_F\rightarrow 0.$$ Therefore $Y$ is free. One should observe (also from Macaulay 2) that each entry in the syzygies matrix $\phi$ belongs to the ideal $\langle x,y\rangle$ of the point $P$. So if we localize at this ideal the resolution remains minimal (so at $P$, $J_F$ is not locally a complete intersection) and the Step 1 in the proof of Theorem \ref{thm:main} does not work.

This step in the proof can not be avoided, since any syzygy $(A,B,C)$ has $\langle A,B,C\rangle\subseteq\langle x,y\rangle$, and therefore it cannot form a regular sequence.
\end{exm}

\begin{exm} Next we give an example of a quasihomogeneous divisor that is free. Let $Y=V(y(x^2+yz))$ be a union of a conic and a line tangent to the conic at $P=[0,0,1]$. So the singularity $P$ is not transverse. We have $f(x,y)=F(x,y,1)=x^2y+y^2$ and this is weighted homogeneous with the weight of $x$ being $\alpha=1$, and the weight of $y$ being $\beta=2$. So $Y$ is quasihomogeneous.

$J_F=\langle xy,x^2+2yz,y^2\rangle$. Observe that $$(-x)xy+y(x^2+2yz)+(-2z)y^2=0$$ and $\{-x,y,-2z\}$ is an $R-$regular sequence. So $Y$ is free.
\end{exm}

\section{Applications and connections to the reduced Jacobian scheme}

Let $Y=V(F)$ be a reduced (not necessarily irreducible) curve in $X=\mathbb P^2$. If $AF_x+BF_y+CF_z=0$ is a syzygy, then $\theta=A\frac{\partial}{\partial x}+B\frac{\partial}{\partial y}+C\frac{\partial}{\partial z}$ belongs to $D(Y)$, and it will be called a \textit{special logarithmic derivation}.

The recipe to obtain a special logarithmic derivation from a non-special logarithmic derivation that is not a multiple of $\theta_E$ is immediate: if $\theta_1=A_1\frac{\partial}{\partial x}+B_1\frac{\partial}{\partial y}+C_1\frac{\partial}{\partial z}\in D(Y)$ is a logarithmic derivation, not a multiple of the Euler derivation $\theta_E$, then $\theta(F)=D\cdot F$ for some $D\in R$. After using Euler relation on $F$ in the right-hand side we obtain: $$(A_1-xD/n)F_x+(B_1-yD/n)F_y+(C_1-zD/n)F_z=0,$$ where $n=\deg(F)$.

If $\{A,B,C\}$ above is a regular sequence and if $\deg(A)=\deg(B)=\deg(C)=d$, $\theta$ will be called a \textit{regular special logarithmic derivation of degree $d$}. From Theorem \ref{thm:freediv}, every free quasihomogeneous reduced curve $Y=V(F)$ in $\mathbb P^2$ has a regular special logarithmic derivation.

When $D(Y)$ is a free $R-$module (of rank 3), the degrees of the elements in a basis for $D(Y)$ are called {\em the exponents of $Y$}, denoted with $exp(Y)=\{1,a,b\}$ (the number 1 comes from the degree of the Euler derivation). If $Y$ is free, then it should have at least one regular special logarithmic derivation of degree $a$ or $b$. If $a\leq b$, and if the syzygy of degree $a$ is not regular, then there exists a syzygy of degree $b$ which forms a regular sequence. The way one should construct this syzygy is explained in Step 4 in the proof of Theorem \ref{thm:main}.

\vskip .1in

{\em The reduced Jacobian scheme of $Y=V(F)\subset\mathbb P^2$} is the reduced zero-dimensional scheme of the singular locus of $Y$. It has as defining ideal $\sqrt{J_F}$.

\subsection{Line arrangements in $\mathbb P^2$.} Let $\mathcal A=V(Q)$ be a central essential arrangement in $\mathbb C^3$ (i.e. a line arrangement in $\mathbb P^2$). As a consequence to Theorem \ref{thm:freediv}, we give an upper bound for $\deg(\sqrt{J_Q})$, when $\mathcal A$ is free. A priori the degree of this scheme is less or equal than ${{|\mathcal A|}\choose{2}}$, and at this moment we are not aware of any other bound. Very little is known about the reduced Jacobian scheme of an arrangement, $V(J_Q)$, and about the connections between this scheme and the arrangement. In fact, Wakefield and Yoshinaga give a very nice and simple example (\cite{wy}, Example 4.2) of two rank 3 central arrangements completely different (they have different number of hyperplanes, and one is supersolvable and the other is not) with the same $\sqrt{J_Q}$, emphasizing that the reduced Jacobian scheme has few information about the arrangement itself.

\begin{prop} \label{prop:application}Let $\mathcal A$ be a free line arrangement in $\mathbb P^2$ with a regular special logarithmic derivation of degree $d\geq 2$. Then $$\deg(\sqrt{J_Q})\leq d^2+d+1.$$
\end{prop}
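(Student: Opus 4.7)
The plan is to interpret the regular syzygy $\theta = A\partial_x + B\partial_y + C\partial_z$ as defining a finite morphism $\psi = [A:B:C]\colon \mathbb{P}^2 \to \mathbb{P}^2$ (well-defined because $\{A,B,C\}$ is a regular sequence, so $V(A,B,C) = \emptyset$), whose fixed-point locus will contain every singular point of $\mathcal{A}$; a Hilbert-Burch computation then bounds this locus by $d^2+d+1$.

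\emph{Step 1 (singular points are fixed points of $\psi$).} Writing $Q = \ell_1 \cdots \ell_n$ with $\ell_j = a_j x + b_j y + c_j z$ and expanding $\theta(Q) = 0$ as $\sum_j (\prod_{i\neq j}\ell_i)\,\theta(\ell_j) = 0$, reduction modulo each $\ell_j$ forces $\ell_j \mid \theta(\ell_j) = a_j A + b_j B + c_j C$. Hence at any $P \in \ell_j$ the vector $(A(P), B(P), C(P))$ lies in the $2$-plane $\{\ell_j = 0\} \subset \mathbb{C}^3$. At a singular point $P$ lying on $m \geq 2$ distinct lines of $\mathcal{A}$, this vector lies in the intersection of $m$ such planes, which equals the line $\mathbb{C}\cdot P$, and is nonzero because $V(A,B,C) = \emptyset$. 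So $\psi(P) = P$.

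\emph{Step 2 (Hilbert-Burch count).} Let $I = \langle Ay - Bx,\; Az - Cx,\; Bz - Cy\rangle$ be the ideal of $2\times 2$ minors of the $3\times 2$ matrix with columns $(A,B,C)^{t}$ and $(x,y,z)^{t}$. By Step 1, $V(\sqrt{J_Q}) \subseteq V(I)$. The two columns give syzygies on the minors, of degrees $d$ and $1$, each a regular sequence. Provided $ht(I) = 2$, Hilbert-Burch yields the minimal free resolution
$$0 \to R(-2d-1) \oplus R(-d-2) \to R(-d-1)^{3} \to R \to R/I \to 0,$$
and Taylor expanding the Hilbert series $(1 - 3t^{d+1} + t^{2d+1} + t^{d+2})/(1-t)^{3}$ at $t=1$ shows the Hilbert polynomial equals the constant $d^2 + d + 1$. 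Therefore $\deg(\sqrt{J_Q}) \leq \deg(R/I) = d^2 + d + 1$.

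\emph{Main obstacle.} The delicate step is verifying $ht(I) = 2$, equivalently that $\psi$ has no $1$-dimensional fixed locus. If on the contrary $V(G) \subseteq V(I)$ were an irreducible curve with $\gcd(G, Q) = 1$, then on $V(G)$ the vectors $(A, B, C)$ and $(x, y, z)$ are proportional; writing $A = Gu + xh,\; B = -Gv + yh,\; C = Gw + zh$ (via a Koszul-type analysis of the syzygies of $(x,y,z)$), substituting into $AQ_x + BQ_y + CQ_z = 0$, and invoking Euler's identity $xQ_x + yQ_y + zQ_z = nQ$ produces $G(uQ_x - vQ_y + wQ_z) = -nhQ$. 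Coprimality forces $G \mid h$, yielding a nonzero syzygy on $J_Q$ of degree $d - \deg G < d$, which contradicts minimality of $d$ (or, if the putative new syzygy vanishes, would force $A = 0$, violating the regular-sequence hypothesis). The case $G \mid Q$ requires a separate local argument near the corresponding line of $\mathcal{A}$. Once height $2$ is established, Step 2 completes the proof.
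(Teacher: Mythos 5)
Your overall strategy is the same as the paper's: show every singular point is a fixed point of $[A:B:C]$, pass to the ideal of $2\times 2$ minors $I^{(A,B,C)}=\langle yA-xB,\,zA-xC,\,zB-yC\rangle$, and extract the bound $d^2+d+1$ from the Hilbert--Burch resolution once $ht(I^{(A,B,C)})=2$ is known. Steps 1 and 2 and the Hilbert polynomial computation are correct and match the paper.

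The genuine gap is in your treatment of the "main obstacle," i.e.\ the height-$2$ claim, in two places. First, your contradiction "a syzygy of degree $d-\deg G<d$ contradicts minimality of $d$" is not available: the proposition does not assume $d$ is the minimal syzygy degree, and indeed it cannot (in the paper's Example 3.4, the arrangement $xyz(x-y)(x-z)(y-z)$ has a non-regular syzygy of degree $2$ while the regular one has degree $3$, so the proposition is applied with a non-minimal $d$). Second, you explicitly leave open the case where the common factor $G$ divides $Q$, which is not a case you may discard. Both problems disappear if you finish the way the paper does: from $V(G)\subseteq V(I^{(A,B,C)})$ one gets $yA-xB=fG$, $zA-xC=gG$, $zB-yC=hG$; the Koszul relation $zf-yg+xh=0$ lets you write $f,g,h$ in terms of $v_1,v_2,v_3$, and back-substitution yields $A-v_1G=xw$, $B+v_2G=yw$, $C+v_3G=zw$ for a single $w\in R$, whence $\langle A,B,C\rangle\subseteq\langle G,w\rangle$, an ideal of height at most $2$ --- contradicting that $\{A,B,C\}$ is a regular sequence, with no appeal to coprimality with $Q$ or to minimality of $d$. (In fact your own computation already contains the right contradiction: once $G\mid h$ in $A=Gu+xh$, $B=-Gv+yh$, $C=Gw+zh$, you get $G\mid A,B,C$ directly, which kills the regular sequence; you just reached for the wrong conclusion.) Note also that the paper separately records that the three minors are minimal generators (no linear syzygy on a regular sequence in degrees $\geq 2$, which is where $d\geq 2$ enters); this is worth stating if you want the displayed resolution to be the minimal one.
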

\begin{proof} Let $\theta=A\frac{\partial}{\partial x}+B\frac{\partial}{\partial y}+C\frac{\partial}{\partial z}$ be the regular special logarithmic derivation of degree $d\geq 2$. This means that $AQ_x+BQ_y+CQ_z=0$ and $\{A,B,C\}$ is a regular sequence.

Suppose $Q=L_1\cdots L_n,$ where we fixed the defining linear forms: $L_i=a_ix+b_iy+c_iz$. Therefore $$(*)\mbox{ }a_iA+b_iB+c_iC=L_iS_i, \mbox{ for some }S_i\in R,\mbox{ for all }i=1,\ldots,n.$$

Let $[a,b,c]\in V(J_Q)$. Then $[a,b,c]$ is the intersection of at least two lines in $\mathcal A$. So there exist $i\neq j$ such that $L_i(a,b,c)=0$ and $L_j(a,b,c)=0$.

Two things can happen: either $[a,b,c]\in V(A,B,C)$, or not.

If not, then the point $[A(a,b,c),B(a,b,c),C(a,b,c)]\in\mathbb P^2$ is also the intersection of the lines of equations $L_i=0$ and $L_j=0$ (evaluate $(*)$ at the point $[a,b,c]$). This means that there exists a nonzero constant $k$ such that $$A(a,b,c)=ka,B(a,b,c)=kb,C(a,b,c)=kc.$$ Denoting with $$I^{(A,B,C)}=\langle yA-xB,zA-xC,zB-yC\rangle,$$ we see that $$[a,b,c]\in V(I^{(A,B,C)}).$$

If $[a,b,c]\in V(A,B,C)$, then, since $I^{(A,B,C)}\subset \langle A,B,C\rangle$, we have that $V(A,B,C)\subseteq V(I^{(A,B,C)})$.

We obtained that $$V(J_Q)\subseteq V(I^{(A,B,C)}).$$

\vskip .1in
\noindent \textit{Claim 1:} $I^{(A,B,C)}$ is minimally generated by three polynomials.

Suppose there exist constants $\alpha,\beta\in\mathbb C$ such that $$yA-xB=\alpha(zA-xC)+\beta(zB-yC).$$ Then we obtain a linear syzygy on $A,B,C$: $$(y-\alpha z)A+(-x-\beta z)B+(\alpha x+\beta y)C=0.$$

If $d\geq 2$, then this is impossible; one cannot have a linear syzygy on a regular sequence consisting of homogeneous polynomials all of degree $\geq 2$.

\vskip .1in
\noindent \textit{Claim 2:} $ht(I^{(A,B,C)})=2$.

Since $I^{(A,B,C)}\subset\sqrt{J_Q}$, and $ht(J_Q)=2$, then $ht(I^{(A,B,C)})\leq 2$. If $ht(I^{(A,B,C)})=1$, since $R=\mathbb C[x,y,z]$ is a unique factorization domain, there exists $D\in R$, non-constant homogeneous polynomial, dividing each of the generators of $I^{(A,B,C)}$.

So we have
\begin{eqnarray}
yA-xB&=&fD\nonumber\\
zA-xC&=&gD\nonumber\\
zB-yC&=&hD,\nonumber
\end{eqnarray} with $ht(f,g,h)=2$.

We obtain that $$zf-yg+xh=0.$$ Since $\{x,y,z\}$ form a regular sequence we get that $$f=yv_1+xv_2,g=zv_1+xv_3, h=yv_3-zv_2,$$ where $v_i\in R$.

Plugging these back into the equations above we get that
\begin{eqnarray}
y(A-v_1D)-x(B+v_2D)&=&0\nonumber\\
z(A-v_1D)-x(C+v_3D)&=&0\nonumber\\
z(B+v_2D)-y(C+v_3D)&=&0\nonumber
\end{eqnarray}

But these mean that there exists $w\in R$ such that
\begin{eqnarray}
A-v_1D&=&xw\nonumber\\
B+v_2D&=&yw\nonumber\\
C+v_3D&=&zw\nonumber
\end{eqnarray}

And therefore, $\langle A,B,C\rangle\subset\langle D,w\rangle$. Contradiction.

\vskip .1in

We just showed that $ht(I^{(A,B,C)})=2$ and by the way it is defined, $R/I^{(A,B,C)}$ has the Hilbert-Burch minimal free resolution: \footnote{Here we could use also Theorem \ref{thm:Thm0}, by observing that $z(yA-xB)-y(zA-xC)+x(zB-yC)=0$, and $\{z,-y,x\}$ is an $R-$regular sequence.}
$$0\rightarrow R(-(d+2))\oplus R(-(2d+1))\rightarrow R^3(-(d+1))\rightarrow R\rightarrow R/I^{(A,B,C)}\rightarrow 0.$$

Once we have this, since $ht(I^{(A,B,C)})=2$, the Hilbert polynomial computation gives the degree of $I^{(A,B,C)}$: $$\deg(I^{(A,B,C)})=d^2+d+1.$$

Since $I^{(A,B,C)}\subseteq\sqrt{J_Q}$, finally we obtain the desired upper bound.\end{proof}

\begin{exm}\label{exm:example1} The condition $d\geq 2$ in Proposition \ref{prop:application} is essential. Consider the following free (supersolvable) arrangement: a pencil of $n-1$ lines through the point $[0,0,1]$ and the line at infinity of equation $z=0$. The defining polynomial of this arrangement is $$Q=zP,$$ where $P$ is the defining polynomial of the pencil of lines, so $P\in\mathbb C[x,y]$.

Since $Q_x=zP_x,Q_y=zP_y,Q_z=P$, and since $(n-1)P=xP_x+yP_y,$ we get that $\theta=x\frac{\partial}{\partial x}+y\frac{\partial}{\partial y}-(n-1)z\frac{\partial}{\partial z}$ is a regular special logarithmic derivation of degree 1.

Of course, $|V(J_Q)|=n$ which is not less or equal than $3=1^2+1+1$, if $n\geq 4$. The reason why the proof of Proposition \ref{prop:application} does not work for this example is that $I^{(x,y,-(n-1)z)}=\langle xz,yz\rangle$ has codimension 1.
\end{exm}

\begin{rem}\label{rem:remark1} The situation described in Example \ref{exm:example1} is unique, in the sense that if a line arrangement $\mathcal A\subset \mathbb P^2, |\mathcal A|=n,$ has a special logarithmic derivation of degree 1 (not necessarily regular), then it must consist of a pencil of $n-1$ lines through a point, and another line missing that point.

Let $Q=\prod_{i=1}^nL_i,$ with $L_i=a_ix+b_iy+c_iz$, and let $M_i=\alpha_ix+\beta_iy+\gamma_iz, i=1,2,3$ be three linear forms such that $$M_1Q_x+M_2Q_y+M_3Q_z=0.$$ Since $\theta=M_1\frac{\partial}{\partial x}+M_2\frac{\partial}{\partial y}+M_3\frac{\partial}{\partial z}$ is a logarithmic derivation (we have $\theta(Q)=0\in\langle Q\rangle$; so $\theta$ is a special logarithmic derivation of degree 1), from the product rule of derivations: $\theta(Q)=L_1\theta(L_2\cdots L_n)+\theta(L_1)L_2\cdots L_n$, and from the syzygy equation above we have $$M_1a_i+M_2b_i+M_3c_i=L_id_i, \mbox{ for all }i=1,\ldots,n,$$  where $d_i\in\mathbb C\mbox{ with }\sum_{i=1}^nd_i=0$.

Without any loss of generality, we may assume that $L_1=x,L_2=y,L_3=z$. Then we obtain:
\begin{eqnarray}
\alpha_1=d_1,\beta_1=0,\gamma_1=0\nonumber\\
\alpha_2=0,\beta_2=d_2,\gamma_2=0\nonumber\\
\alpha_3=0,\beta_3=0,\gamma_3=d_3.\nonumber
\end{eqnarray}

From these, for $i\geq 4$ we obtain that $$(\sharp) \mbox{ }(d_1-d_i)a_i=0,(d_2-d_i)b_i=0,(d_3-d_i)c_i=0.$$

If $d_i\neq d_1$ (or $d_i\neq d_2$, or $d_i\neq d_3$) for all $i\geq 4$, then from $(\sharp)$ we have $a_i=0$ (or $b_i=0$, or, respectively, $c_i=0$) for all $i\geq 4$. But this will mean that the lines $V(L_2),V(L_3),V(L_4),\ldots,V(L_n)$ will all pass through the point $[1,0,0]$. And the line $V(L_1)$ does not pass through this point. We have analogous results in the other two situations.

Let us assume that $d_4=\cdots=d_m=d_1$, and $d_j\neq d_1$, for all $n\geq j\geq m+1$.

Let $j_0$ be such that $n\geq j_0\geq m+1$. If $d_{j_0}\neq d_2$ (or $d_{j_0}\neq d_3$), then, since $d_{j_0}\neq d_1$, we obtain from $(\sharp)$ that $a_{j_0}=0$ and $b_{j_0}=0$ (or, respectively, $c_{j_0}=0$). This means that $V(L_{j_0})=V(L_3)$ (or $V(L_{j_0})=V(L_2)$). Contradiction.

We obtained that $d_{m+1}=\cdots=d_n=d_2=d_3=d_{j_0}\neq d_1$. We also have $d_1=d_4\neq d_2$ and $d_1=d_4\neq d_3$. So from $(\sharp)$, we get $b_4=c_4=0$ and so $V(L_4)=V(L_1)$. Contradiction. So the existence of the index $j_0$ is impossible. Hence $$d_4=\cdots=d_n=d_1.$$

If $d_4\neq d_2$ and $d_4\neq d_3$ we obtain again that $b_4=c_4=0$ and so $V(L_4)=V(L_1)$. Contradiction. If $d_4=d_2=d_3$, we have $d_1=d_2=d_3=d_4=\cdots=d_n$, leading to a contradiction with the fact that $\sum_{i=1}^nd_i=0$.

So we can suppose that $d_1=d_3=d_4=\cdots=d_n\neq d_2$. Hence we are in the situation $d_i\neq d_2$, for all $i\geq 4$, described in the paragraph after we listed equations $(\sharp)$. This means that $b_i=0$, for all $i\geq 4$, and so $V(L_1),V(L_3),V(L_4),\ldots,V(L_n)$ all pass through the point $[0,1,0]$. The remaining line $V(L_2)=V(y)$ misses this point.

So we must have that $d_i\neq d_1$ (or $d_i\neq d_2$, or $d_i\neq d_3$) for all $i\geq 4$.
\end{rem}

\begin{exm}\label{exm:example2} Consider $\mathcal A\subset\mathbb P^2$ with defining polynomial $$Q=xyz(x-y)(x-z)(y-z).$$ This is a free arrangement (supersolvable) with exponents $exp(\mathcal A)=\{1,2,3\}$.

This means that the first syzygies module is free with a basis consisting of two syzygies of degrees 2 and 3, respectively. One can check that $$(-x^2+xy+xz)Q_x+(xy-y^2+yz)Q_y+(xz+yz-z^2)Q_z=0$$ gives the syzygy of degree 2. Since $ht(\langle-x^2+xy+xz,xy-y^2+yz,xz+yz-z^2\rangle)=2$, this syzygy is not regular.

It remains that the regular syzygy must have degree 3, and therefore $d=3$ in Proposition \ref{prop:application}.

We also have that $\deg(\sqrt{J_Q})=7$, and indeed we have $7\leq 3^2+3+1=13$.

Is it possible to construct examples when the bound in Proposition \ref{prop:application} is attained? Can the upper bound be improved?
\end{exm}

Proposition \ref{prop:application} gives a criterion for non-freeness.

\begin{cor}\label{cor:corollary2} Let $\mathcal A\subset\mathbb P^2$ be an essential arrangement of $n$ lines, that is not a union of a pencil of $n-1$ lines and a generic line (i.e. the situation in Remark \ref{rem:remark1}). If $\deg(\sqrt{J_Q})\geq n^2-5n+8$, then $\mathcal A$ is not free.
\end{cor}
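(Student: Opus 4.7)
The plan is to argue by contraposition: assuming $\mathcal A$ is free, I will produce an upper bound on $\deg(\sqrt{J_Q})$ strictly less than $n^2-5n+8$, contradicting the hypothesis.

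First, since $\mathcal A$ is a free central essential rank-$3$ arrangement, $D(\mathcal A)$ is free of rank $3$ with exponents $exp(\mathcal A)=\{1,a,b\}$, where $a\leq b$. The standard fact that the exponents of a free central arrangement sum to the number of hyperplanes gives $1+a+b=n$, that is, $a+b=n-1$. The first substantive step is to eliminate the possibility $a=1$: a second exponent equal to $1$ would produce, alongside the Euler derivation, a special logarithmic derivation of degree $1$. By \remref{rem:remark1}, the only line arrangements carrying a degree-$1$ special logarithmic derivation are precisely the pencils of $n-1$ lines together with a generic line, a configuration we have excluded by hypothesis. Hence $a\geq 2$, so $b=n-1-a\leq n-3$.

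Next I would invoke the discussion immediately preceding \secref{1} (the subsection on line arrangements), which guarantees that every free quasihomogeneous reduced curve in $\mathbb P^2$ admits a regular special logarithmic derivation whose degree $d$ equals either $a$ or $b$. Since every line arrangement is quasihomogeneous (\remref{rem:lci}), this applies here, producing such a regular derivation of degree $d\in\{a,b\}$ with $2\leq d\leq b\leq n-3$. Because $d\geq 2$, \propref{prop:application} is applicable and yields
\[
\deg(\sqrt{J_Q})\leq d^{2}+d+1.
\]
Using monotonicity of $x\mapsto x^{2}+x+1$ on $[2,\infty)$ and $d\leq b\leq n-3$,
\[
\deg(\sqrt{J_Q})\leq b^{2}+b+1\leq (n-3)^{2}+(n-3)+1=n^{2}-5n+7,
\]
contradicting $\deg(\sqrt{J_Q})\geq n^{2}-5n+8$. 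Hence $\mathcal A$ cannot be free.

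The arithmetic at the end is routine; the main conceptual point of the argument is the correct use of \remref{rem:remark1} to force $a\geq 2$, which is precisely why the excluded configuration is explicitly mentioned in the statement. I do not expect any genuine technical obstacle beyond this bookkeeping. A natural secondary question, which this approach does not address, is whether the bound could be strengthened when the degree-$a$ syzygy happens to be regular; however, since freeness alone does not identify which of $a$ or $b$ corresponds to a regular special logarithmic derivation, one is forced to use the worst-case estimate $d\leq b$.
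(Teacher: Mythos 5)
Your argument is correct and follows essentially the same route as the paper: use \remref{rem:remark1} to rule out $a=1$, deduce $b\leq n-3$ from $a+b=n-1$, and apply \propref{prop:application} to the regular special logarithmic derivation of degree $d\in\{a,b\}$ (so $d\geq 2$ and $d\leq b$) to get $\deg(\sqrt{J_Q})\leq b^2+b+1\leq n^2-5n+7$, a contradiction. Your write-up is in fact slightly more explicit than the paper's about why the hypothesis $d\geq 2$ of \propref{prop:application} is satisfied, but there is no substantive difference.
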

\begin{proof} Suppose $\mathcal A$ is free with exponents $exp(\mathcal A)=\{1,a,b\}$. Suppose $a\leq b$. Since $\mathcal A$ is not of the type described in Remark \ref{rem:remark1}, then $a\geq 2$. Therefore, since $a+b=n-1$, we have $b\leq n-3$.

From Proposition \ref{prop:application}, we have $$\deg(\sqrt{J_Q})\leq b^2+b+1\leq (n-3)^2+(n-3)+1=n^2-5n+7.$$ Contradiction.
\end{proof}

\subsection{Other results concerning free divisors in $\mathbb P^2$}

In what follows we will see how the degrees of the first syzygies on the Jacobian ideal relate to the degrees of the generators of the radical of the Jacobian ideal.

\begin{prop} \label{prop:proposition2} Let $Y=V(F)\subset\mathbb P^2$ be an arrangement of $m$ smooth irreducible curves $V(F_i)$, $m\geq 2$, that intersect transversely. Let $\beta(J_F)$ be the minimal degree of a syzygy on the Jacobian ideal $J_F$ of $F$. Let $\alpha(\sqrt{J_F})$ be the minimum degree of a generator of $\sqrt{J_F}$. Then $$\alpha(\sqrt{J_F})\leq \beta(J_F)+1.$$
\end{prop}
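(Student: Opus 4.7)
The plan is to use a syzygy of minimal degree $\beta := \beta(J_F)$ on $J_F$ to explicitly produce elements of $\sqrt{J_F}$ of degree $\beta+1$. Writing such a syzygy as $AF_x + BF_y + CF_z = 0$ with $\deg A = \deg B = \deg C = \beta$, I would mimic the construction used in the proof of \propref{prop:application} and consider the three $2\times 2$ minors $yA-xB$, $zA-xC$, $zB-yC$, each of degree $\beta+1$. The goal is to show that all three belong to $\sqrt{J_F}$ and that at least one of them is nonzero, which immediately yields $\alpha(\sqrt{J_F}) \le \beta + 1$.

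For the containment, set $\theta = A\partial_x + B\partial_y + C\partial_z \in D_0(Y)$, so $\theta(F)=0$. Expanding via the product rule gives $\sum_i \theta(F_i)\prod_{k\ne i}F_k = 0$ in $R$. Since the $F_i$ are pairwise coprime irreducibles, a standard divisibility argument then forces $F_i \mid \theta(F_i)$ for every $i$, so $\theta(F_i)$ vanishes on $V(F_i)$. Because each $V(F_i)$ is smooth, the singular locus $V(J_F)$ is set-theoretically $\bigcup_{i<j}V(F_i)\cap V(F_j)$. At any singular point $P=[p_1,p_2,p_3]\in V(F_i)\cap V(F_j)$, the vanishings $\theta(F_i)(P)=\theta(F_j)(P)=0$ mean that the vector $(A(P),B(P),C(P))\in\mathbb{C}^3$ is orthogonal to both $\nabla F_i(P)$ and $\nabla F_j(P)$. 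Euler's identity applied separately to $F_i$ and $F_j$ shows that the vector $(p_1,p_2,p_3)$ is orthogonal to the same two gradients. Transversality at $P$ means $\nabla F_i(P)$ and $\nabla F_j(P)$ are linearly independent in $\mathbb{C}^3$, so their common orthogonal complement is one-dimensional, forcing $(A(P),B(P),C(P))=\lambda_P(p_1,p_2,p_3)$ for some scalar $\lambda_P$. Consequently each of $yA-xB$, $zA-xC$, $zB-yC$ vanishes at every $P\in V(J_F)$, and so all three lie in $\sqrt{J_F}$.

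Finally, to rule out the possibility that all three minors vanish identically, I would argue by unique factorization in $R$ that this would force $(A,B,C)=h\cdot(x,y,z)$ for some $h\in R$; substituting into the syzygy relation and applying Euler's identity $xF_x+yF_y+zF_z = nF$ (with $n=\deg F$) yields $nhF=0$, hence $h=0$, contradicting the non-triviality of $(A,B,C)$. Thus at least one minor is a nonzero element of $\sqrt{J_F}$ of degree $\beta+1$, which proves the inequality. The main obstacle I anticipate is the linear-algebra step showing $(A(P),B(P),C(P))\propto P$: all the geometric content (transversality combined with Euler) is concentrated there, and one must check carefully that transversality of the smooth curves $V(F_i)$ and $V(F_j)$ at $P\in\mathbb{P}^2$ really translates into linear independence of $\nabla F_i(P)$ and $\nabla F_j(P)$ as vectors in $\mathbb{C}^3$.
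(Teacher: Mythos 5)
Your proposal is correct and follows essentially the same route as the paper: both use a minimal-degree syzygy $(A,B,C)$ to form the degree-$(\beta+1)$ minors $yA-xB$, $zA-xC$, $zB-yC$ and show they vanish on $V(J_F)$ via transversality at the singular points (the paper phrases the key step with tangent lines where you use orthogonality of gradients, but the content is identical). Your added check that not all three minors vanish identically (via unique factorization and the Euler relation) is a small but worthwhile piece of rigor that the paper's proof leaves implicit.
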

\begin{proof} The proof is similar to the beginning of the proof of Proposition \ref{prop:application}. Let $AF_x+BF_y+CF_z=0$ be a syzygy of degree $\beta(J_F)$. Consider $\theta=A\frac{\partial}{\partial x}+B\frac{\partial}{\partial y}+C\frac{\partial}{\partial z}$ in $D(Y)$.

Since $F=F_1\cdots F_m$, from properties of derivations we have that $$(**)\mbox{ }A(F_i)_x+B(F_i)_y+C(F_i)_z=F_i\cdot g_i,\mbox{ with }g_i\in R, i=1,\ldots,m.$$

\vskip .1in

Let $P=[a,b,c]\in V(J_F)$. By the way we defined $Y$, one has $P\in V(F_i)\cap V(F_j)$, for some indices $i\neq j$.

If $P\notin V(A,B,C)$, by evaluating $(**)$ at $P$, we obtain that the point $[A(P),B(P),C(P)]$ is on the tangent line $L_i$ to $V(F_i)$ at $P$, as well as on the tangent line $L_j$ to $V(F_j)$ also at $P$. Since $V(F_i)$ and $V(F_j)$ intersect transversely, the lines $L_i$ and $L_j$ have different slopes and therefore $$P=[A(P),B(P),C(P)].$$

Denoting with $$I^{(A,B,C)}=\langle yA-xB,zA-xC,zB-yC\rangle,$$ we see that $$[a,b,c]\in V(I^{(A,B,C)}).$$

If $P\in V(A,B,C)$, then, since $I^{(A,B,C)}\subset \langle A,B,C\rangle$, we have that $V(A,B,C)\subseteq V(I^{(A,B,C)})$.

We obtained that $$V(J_F)\subseteq V(I^{(A,B,C)}),$$ and hence the assertion.
\end{proof}

The bound in Proposition \ref{prop:proposition2} can be attained as we can see in Example \ref{exm:example2} when we have $\beta(J_Q)=2$ and $\alpha(\sqrt{J_Q})=3$.

\vskip .1in

We end with an immediate corollary to the proof of Proposition \ref{prop:proposition2}.

\begin{cor} Let $Y=V(F)\subset\mathbb P^2$ be an arrangement of $m$ smooth irreducible curves $V(F_i)$, $m\geq 2$, that intersect transversely. Suppose that $Y$ is free. Then, if $(A_1,B_1,C_1)$ and $(A_2,B_2,C_2)$ are a basis for the first syzygy module of $J_F$, we have $$V(J_F)= V(I^{(A_1,B_1,C_1)})\cap V(I^{(A_2,B_2,C_2)}).$$
\end{cor}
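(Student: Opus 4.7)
The plan is to prove both set-theoretic containments separately. The containment $V(J_F) \subseteq V(I^{(A_1,B_1,C_1)}) \cap V(I^{(A_2,B_2,C_2)})$ is immediate from the proof of Proposition \ref{prop:proposition2} applied to each of the two basis syzygies in turn.

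For the reverse containment, the strategy is to exhibit $(F_x,F_y,F_z)$ as an explicit constant multiple of the cross product of the two syzygy rows. Since $Y$ is free and $\theta_E, \theta_i = A_i\partial_x + B_i\partial_y + C_i\partial_z$ $(i=1,2)$ form a basis of $D(Y)$, Saito's criterion yields
$$\det\begin{bmatrix} x & y & z \\ A_1 & B_1 & C_1 \\ A_2 & B_2 & C_2 \end{bmatrix} = \lambda F$$
for some $\lambda\in\mathbb C^{*}$. Setting $m_x = B_1C_2-B_2C_1$, $m_y = A_2C_1-A_1C_2$, $m_z = A_1B_2-A_2B_1$, cofactor expansion along the first row gives $xm_x + ym_y + zm_z = \lambda F$. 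On the other hand, both $(F_x,F_y,F_z)$ and $(m_x,m_y,m_z)$ lie in the kernel of the $2\times 3$ matrix with rows $(A_i,B_i,C_i)$: the former by the defining syzygy equations on $J_F$, the latter by the standard orthogonality of a cross product. This kernel has dimension one over the fraction field of $R$ (the rows being an $R$-basis of a free rank-$2$ module), so the two triples are scalar multiples; comparing with the Euler relation $xF_x+yF_y+zF_z = nF$ fixes the scalar and yields the identity $(F_x,F_y,F_z) = (n/\lambda)(m_x,m_y,m_z)$ in $R$.

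To finish, let $P=[a,b,c] \in V(I^{(A_1,B_1,C_1)}) \cap V(I^{(A_2,B_2,C_2)})$. The vanishing of $yA_i-xB_i$, $zA_i-xC_i$, $zB_i-yC_i$ at $P$ forces the $2\times 3$ matrix with rows $(a,b,c)$ and $(A_i(P),B_i(P),C_i(P))$ to have rank at most one, so $(A_i(P),B_i(P),C_i(P)) = \alpha_i(a,b,c)$ for some $\alpha_i\in\mathbb C$, $i=1,2$. Substituting gives $m_x(P) = \alpha_1\alpha_2(bc - bc) = 0$, and similarly $m_y(P)=m_z(P)=0$; by the proportionality established above, $F_x(P)=F_y(P)=F_z(P)=0$, so $P\in V(J_F)$. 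The one non-routine step is the explicit proportionality between $(F_x,F_y,F_z)$ and $(m_x,m_y,m_z)$, which is where the freeness hypothesis enters in an essential way via Saito's criterion; without freeness one cannot identify the kernel with a principal module generated by the cross product, and the final deduction would break down.
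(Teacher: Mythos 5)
Your proof is correct and matches what the paper intends: the paper states this as an ``immediate corollary to the proof of Proposition \ref{prop:proposition2}'' without writing out details, and your argument is exactly the natural completion --- the inclusion $V(J_F)\subseteq V(I^{(A_1,B_1,C_1)})\cap V(I^{(A_2,B_2,C_2)})$ by applying that proof to each basis syzygy, and the reverse inclusion from the Saito/Hilbert--Burch fact that $(F_x,F_y,F_z)$ is a nonzero scalar multiple of the $2\times 2$ minors of the basis-syzygy matrix, which all vanish at a point where both rows become proportional to $(a,b,c)$.
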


\begin{exm} In Example \ref{exm:example1} we have $(A_1,B_1,C_1)=(x,y,-(n-1)z)$, and $(A_2,B_2,C_2)=(P_y,-P_x,0)$. We have $I_1=I^{(x,y,-(n-1)z)}=\langle xz,yz\rangle$ and $I_2=I^{(P_y,-P_x,0)}=\langle P,zP_y,zP_x\rangle$.

Let $q\in V(I_1)\cap V(I_2)$. Then, $q$ is either $q=[a,b,0]$ with $P(q)=0$, and so $q$ is one of the intersection points between the pencil and the line at infinity, or the other possibility for $q$ is to be $q=[0,0,1]$ which is the multiple point of the pencil.
\end{exm}

\vskip .1in

\noindent\textbf{Acknowledgments}: I would like to express my sincere gratitude to Graham Denham for the inspiring discussions we had on the subject. I thank Max Wakefield for suggesting Example \ref{exm:example2} and Corollary \ref{cor:corollary2}, and all the participants at Terao Fest 2011 for the feedback on generalizing Theorem \ref{thm:freediv} to arbitrary rank hyperplane arrangements. Their questions led to Example \ref{exm:P3}.

I thank the anonymous referee for bringing to my attention the references \cite{eh} and \cite{k}, and for all the useful suggestions and comments.

All the computations were done using Macaulay 2 by D. Grayson and M. Stillman.

\renewcommand{\baselinestretch}{1.0}
\small\normalsize 

\bibliographystyle{amsalpha}

\begin{thebibliography}{10}

\bibitem{cs} Cox, D., Schenck, H. (2003).
            Local complete intersections in $\mathbb P^2$ and Koszul syzygies.
            {\em Proceedings of the A.M.S.} 131: 2007-2014.

\bibitem{dssww} Denham, G., Schenck, H., Schulze, M., Wakefield, M., Walther, U. (2011).
            Local cohomology of logarithmic forms.
            {\em arXiv: 1103.2459}, to appear in {\em Ann. Inst. Fourier}.

\bibitem{e} Eisenbud, D. (1995).
            Commutative Algebra with a View Toward Algebraic Geometry.
            Springer-Verlag, New York.

\bibitem{eh} Eisenbud, D., Huneke, C. (1990).
            Ideals with a regular sequence as syzygy.
            Appendix to ``Sur les hypersufaces dont les sections hyperplanes sont module constant'' by Arnaud Beauville. pp. 121-133.
            Grothendieck Festschrift, Vol. 1.
            {\em Progress in Mathematics} 86: 132-133.

\bibitem{k} Kustin, A. (1986).
            The minimal free resolution of the Huneke-Ulrich deviation two Gorenstein ideals.
            {\em J. Algebra} 100: 265-304.
            
\bibitem{AA} Nejad, A., Simis, A. (2011).
        The Aluffi Algebra.
        {\em Journal of Singularities} 3: 20-47.

\bibitem{ot} Orlik, P., Terao, H. (1992).
            Arrangements of Hyperplanes.
            Springer-Verlag, Berlin-Heidelberg-New York.

\bibitem{r} Reiffen, H. (1967).
            Das Lemma von Poincar\'{e} f\"{u}r holomorphe Differentialformen auf komplexen
R\"{a}umen.
            {\em Mathematische Zeitschrift} 101: 269-284.

\bibitem{s} Saito, K. (1980).
            Theory of logarithmic differential forms and logarithmic vector fields.
            {\em J. Fac. Sci. Univ. Tokyo Sect. IA Math.} 27: 265-291.

\bibitem{s2} Schenck, H. (2000).
             A rank two vector bundle associated to a three arrangement, and its Chern polynomial.
             {\em Advances in Mathematics} 149: 214-229.

\bibitem{st} Schenck, H., Tohaneanu, S. (2009).
            Freeness of conic-line arrangements in $\mathbb P^2$.
            {\em Comentarii Mathematici Helvetici} 84: 235-258.

\bibitem{sc} Schulze, M. (2010).
            Freeness and multirestriction of hyperplane arrangements.
            {\em arXiv:1003.0917}, to appear in {\em Compositio Math}.

\bibitem{sim1} Simis, A. (2005).
Differential idealizers and algebraic free divisors.
 In ``Commutative Algebra: Geometric, Homological, Combinatorial and Computational
Aspects'', Eds: A. Corso, P. Gimenez, M. V. Pinto and S. Zarzuela, pp. 211-226. 
Lecture Notes in Pure and Applied Mathematics, Vol. 244, Chapman \& Hall/CRC.



\bibitem{sim3} Simis, A. (2005).
        The depth of the Jacobian ring of a homogeneous polynomial in three variables.
        {\em Proc. Amer. Math. Soc.} 134: 1591-1598.

\bibitem{t2} Terao, H. (1980).
            Arrangements of hyperplanes and their freeness I.
            {\em J.Fac.Science Univ.Tokyo} 27: 293–312.

\bibitem{t} Terao, H. (1981).
            Generalized exponents of a free arrangement of hyperplanes and Shepard-Todd-
Brieskorn formula.
            {\em Inventiones Mathematicae} 63: 159-179.

\bibitem{wy} Wakefield, M., Yoshinaga, M. (2008).
            The Jacobian ideal of a hyperplane arrangement.
            {\em Math. Res. Lett.} 15: 795-799.

\bibitem{y1} Yoshinaga, M. (2005).
            On the freeness of 3-arrangements,
            {\em Bull. London Math. Soc.} 37: 126-134.

\end{thebibliography}

\end{document}